\numberwithin{equation}{section}
\numberwithin{figure}{section}
\newlength{\lyxlabelwidth}      
 \theoremstyle{definition}
 \newtheorem*{defn*}{\protect\definitionname}
\newenvironment{elabeling}[2][]%
{\settowidth{\lyxlabelwidth}{#2}
\begin{description}[font=\normalfont,style=sameline,
leftmargin=\lyxlabelwidth,#1]}
{\end{description}}
  \theoremstyle{remark}
  \newtheorem*{rem*}{\protect\remarkname}
  \theoremstyle{plain}
  \newtheorem*{thm*}{\protect\theoremname}
  \theoremstyle{plain}
  \newtheorem*{lem*}{\protect\lemmaname}
\theoremstyle{plain}
\newtheorem{thm}{\protect\theoremname}
  \theoremstyle{plain}
  \newtheorem{cor}[thm]{\protect\corollaryname}
  \theoremstyle{remark}
  \newtheorem{rem}[thm]{\protect\remarkname}
  \theoremstyle{definition}
  \newtheorem{example}[thm]{\protect\examplename}
  \theoremstyle{remark}
  \newtheorem*{acknowledgement*}{\protect\acknowledgementname}
\newenvironment{SChinese}{%
\CJKfamily{gbsn}%
\CJKtilde
\CJKnospace}{}
  \providecommand{\acknowledgementname}{Acknowledgement}
  \providecommand{\corollaryname}{Corollary}
  \providecommand{\definitionname}{Definition}
  \providecommand{\examplename}{Example}
  \providecommand{\lemmaname}{Lemma}
  \providecommand{\remarkname}{Remark}
  \providecommand{\theoremname}{Theorem}
\providecommand{\theoremname}{Theorem}
\begin{document}

\title{On Limits and Colimits Of Comodules over a Coalgebra in a Tensor
Category}
\begin{abstract}
We show that the category of comodules over a coassociative coalgebra
has arbitrary limits and colimits under additional assumptions.
\end{abstract}

\author{Anton Lyubinin}

\email{anton@ustc.edu.cn, anton@lyubinin.kiev.ua}

\address{Department of Mathematics, School of Mathematical Sciences, \begin{CJK}{UTF8}{}\begin{SChinese}
中国科学技术大学
\end{SChinese}
\end{CJK}University of Science and Technology of China, Hefei, Anhui, People's
Republic of China}

\maketitle
\global\long\def\cm{\Delta}
\global\long\def\id#1{\mathrm{Id}_{#1}}
\global\long\def\lcom#1#2{#1-\mathrm{Comod}_{\mathcal{#2}}}
\global\long\def\limc#1#2{{\displaystyle \lim_{\underset{#2}{\longleftarrow}}}^{\mathcal{#1}}}

\global\long\def\colimc#1#2{{\displaystyle \lim_{\underset{#2}{\longrightarrow}}}^{\mathcal{#1}}}

\footnote{Partially supported by the grant from Chinese Universities Scientific
Fund (CUSF), Project WK0010000029.%
}

The purpose of this note is to show that a category of comodules over
a coassociative coalgebra has arbitrary limits and colimits. To the
author's knowledge, the existence of limits (or even products) in
a category of comodules is not covered in the literature. Only left
comodules are considered here, but all arguments work for right comodules
as well.

We will work in a monoidal (tensor) category $\mathcal{C}$. In case
of $\mathcal{C}=\mathrm{Vect}_{K}$ vector spaces over a field $K$,
existence of limits of comodules can be proven using a simple argument
through rational modules. Such an argument is possible because $\mathrm{Vect}_{K}$
is enriched over itself. Our proof is more general and works for tensor
categories without any enriched structure.

After this note was finished, the author was informed that the question
of existence of limits and colimits for comodules was considered recently
in \cite{POR} for the case of comodules ${\rm Comod}_{A}$ over a
coalgebra $A$ over a commutative ring $R$. The result there is obtained
via embedding of ${\rm Comod}_{A}$ into the category ${\rm Coalg}F$
of coalgebras w.r.t. a certain functor $F:\mathcal{C}\to\mathcal{C}$
over a base category $\mathcal{C}$. While methods of \cite{POR}
are applicable in cases other than ${\rm Comod}_{A}$, we prove our
results by direct construction, which should be more accessible for
non-experts (like the author himself). Furthermore, results of \cite{POR}
for ${\rm Coalg}F$ require $\mathcal{C}$ to be a concrete category,
which is not required in the present paper. Thus our results can be
applied to tensor categories, which are not concrete (like some tensor
categories appearing, for example, in the brave new algebra). It might
be possible, though, to remove those assumptions in \cite{POR} by
giving direct proofs, similar to the present paper.

We will briefly recall the basic notions for general tensor categories.
All of them are standard, so some details will be skipped. 
\begin{defn*}
A \emph{tensor category} $\left(\mathcal{C},\otimes,I,\alpha,\lambda,\mu\right)$
is the following data:
\begin{itemize}
\item a category $\mathcal{C}$
\item a covariant functor $\otimes:\mathcal{C}\times\mathcal{C}\to\mathcal{C}$
(tensor product);
\item an object $I\in\mathcal{C}$ (unit);
\item a natural isomorphisms

\begin{elabeling}{00.00.0000}
\item [{$\alpha\left(A,B,C\right):\left(A\otimes B\right)\otimes C\to A\otimes\left(B\otimes C\right)$}] (associativity)
\item [{$\lambda\left(A\right):I\otimes A\to A$}] (left unit)
\item [{$\mu\left(A\right):A\otimes I\to A$}] (right unit)
\end{elabeling}
\end{itemize}
subject to coherence conditions (see \cite[3.2.1]{P}).

A tensor category is called \emph{strict} if $\alpha$, $\lambda$
and $\mu$ are identity morphisms.\end{defn*}
\begin{rem*}
We will be assuming $\alpha$ is identity morphism. $\lambda:\left(I\otimes-\right)\to\left(-\right)$
and $\mu:\left(-\otimes I\right)\to\left(-\right)$ are natural transformations
of functors. Thus for any morphism $f:M\to N$ in $\mathcal{C}$ we
have the identities $f\circ\lambda\left(M\right)=\lambda\left(N\right)\circ\left(\id I\otimes f\right)$
and $f\circ\mu\left(M\right)=\mu\left(N\right)\circ\left(f\otimes\id I\right)$.
\end{rem*}
\smallskip{}

\begin{defn*}
\cite[3.2.17]{P} A coassociative coalgebra $\left(C,\Delta_{C},\epsilon_{C}\right)$
in $\mathcal{C}$ is 
\begin{itemize}
\item an object $C\in\mathcal{C}$ 
\item morphisms $\cm_{C}:C\to C\otimes C$ and $\epsilon_{C}:C\to I$ such
that

\begin{elabeling}{00.00.0000}
\item [{•\ \ $\left(\cm_{C}\otimes\id C\right)\circ\cm_{C}=\left(\id C\otimes\cm_{C}\right)\circ\cm_{C}$;}]~
\item [{•\ \ $\lambda\left(C\right)\circ\left(\epsilon_{C}\otimes\id C\right)\circ\cm_{C}=\mu\left(C\right)\circ\left(\id C\otimes\epsilon_{C}\right)\circ\cm_{C}=\id C$.}]~
\end{elabeling}
\end{itemize}
\end{defn*}
A morphism of coalgebras $C$ and $D$ is a morphism $f:C\to D$ in
$\mathcal{C}$ that commute with coalgebra structure maps. Coalgebras
in $\mathcal{C}$ form a category that we denote as $\mathrm{Coalg}_{\mathcal{C}}$.
\begin{defn*}
A left comodule $\left(V,\rho_{V}\right)$ over $C$ (in $\mathcal{C}$)
is a an object $V\in\mathcal{C}$ and a morphism $\rho_{V}:V\to C\otimes V$,
called coaction, that satisfies the following identities:
\begin{itemize}
\item $\left(\cm_{C}\otimes\id V\right)\circ\rho_{V}=\left(\id C\otimes\rho_{V}\right)\circ\rho_{V}$;
\item $\lambda\left(V\right)\circ\left(\epsilon_{C}\otimes\id V\right)\circ\rho_{V}=\id V$.
\end{itemize}
\end{defn*}
A morphism of comodules $V$ and $U$ is a map $f:V\to U$ in $\mathcal{C}$
that commute with comodule structure maps. Left comodules over $C$
in $\mathcal{C}$ form a category $\lcom CC$. Similarly one can define
the category of right comodules $\mathrm{Comod}_{\mathcal{C}}-C$.

If $\mathcal{C}$ has a zero object, then one can define a zero comodule
in an obvious way. Thus the categories $\lcom CC$ and $\mathrm{Comod}_{\mathcal{C}}-C$
will also have a zero object.

We will also use the notion of cofree comodule, which exists in any
tensor category.
\begin{defn*}
For an object $X\in\mathcal{C}$ a \emph{left cofree comodule over
}$X$ is the pair $\left(CF\left(X\right),p\right)$, where $CF\left(X\right)$
is the left comodule $\left(C\otimes X,\rho_{C\otimes X}=\Delta_{C}\otimes\id X\right)$
and a morphism $p=\lambda\left(X\right)\circ\left(\epsilon_{C}\otimes\id X\right):C\otimes X\to X$. 
\end{defn*}
$CF\left(X\right)$ has the following universal property: for any
$V\in\lcom CC$ and any $f:V\to X$ there exist a comodule morphism
$f':V\to C\otimes X$ s.t. $f=p\circ f'$. One can show by direct
computation that for any $f'$ satisfying this property one has $f'=\left(\id C\otimes f\right)\circ\rho_{V}$
and thus $f'$ is unique.

We recall that the category $\mathcal{C}$ is called \emph{well-powered}
if for each $c\in\mathrm{Ob}\left(\mathcal{C}\right)$ the poset $\mathrm{Sub}_{\mathcal{C}}\left(c\right)$
of subobjects of $c$ is a small category. A complete, well-powered
category has (epi, extremal mono)-factorizations as well as (extremal
epi, mono)-factorizations of morphisms \cite[0.5]{AR}. In particular,
in the coim-factorization of a morphism $f=k_{f}\circ{\rm coim}\left(f\right)=k_{f}\circ{\rm coker}\left({\rm ker}\left(f\right)\right)$,
${\rm coim}\left(f\right)$ is a regular epimorphism, and thus it
is an (extremal epi, mono) factorization.
\begin{thm*}
Let $\mathcal{C}$ be a tensor category and $\lcom CC$ is the category
of left comodules over a coalgebra $C\in\mathrm{Coalg}_{\mathcal{C}}$.
Then

\bgroup 
\renewcommand\theenumi{(\alph{enumi})}
\renewcommand\labelenumi{\theenumi}
\begin{enumerate}
\item If $\mathcal{C}$ has (finite) coproducts then $\lcom CC$ has (finite)
coproducts;
\item If $\mathcal{C}$ has coequalizers then $\lcom CC$ has coequalizers;
\item If $\mathcal{C}$ is cocomplete then $\lcom CC$ is cocomplete.
\end{enumerate}
\egroup{}

Assume that $\mathcal{C}$ is well-powered, cocomplete, has zero object
and pullbacks (and thus has finite limits). Also assume a technical
condition that tensor product $C\otimes-$ preserve pullbacks of monomorphisms.
Then 

\bgroup 
\renewcommand\theenumi{(\alph{enumi})}
\renewcommand\labelenumi{\theenumi}
\begin{enumerate}
\item [(d)]\setcounter{enumi}{4}$\lcom CC$ has finite limits;
\item If $\mathcal{C}$ is complete then $\lcom CC$ is complete.
\end{enumerate}
\egroup{}
\end{thm*}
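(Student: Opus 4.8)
\emph{Parts (a)--(c).} The plan is to show that the forgetful functor $U\colon\lcom CC\to\mathcal{C}$ \emph{creates} all colimits; notably this uses no hypothesis on $C\otimes-$. Given a diagram $D\colon J\to\lcom CC$ whose underlying diagram has a colimit $L$ in $\mathcal{C}$, with coprojections $\iota_{j}\colon UD_{j}\to L$, the morphisms $(\id C\otimes\iota_{j})\circ\rho_{D_{j}}\colon UD_{j}\to C\otimes L$ form a cocone (precisely because every arrow of $D$ commutes with coactions), hence factor uniquely through a morphism $\rho_{L}\colon L\to C\otimes L$. The coassociativity and counit axioms for $\rho_{L}$, as well as the assertion that $(L,\rho_{L})$ together with the $\iota_{j}$ is a colimit cocone in $\lcom CC$, are equalities between morphisms out of the colimit $L$, which I would verify by precomposing with each $\iota_{j}$ and reducing to the corresponding facts for $D_{j}$. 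This establishes (a), (b) and (c) at once.

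\emph{Parts (d) and (e).} Now $U$ no longer creates limits, since $C\otimes-$ need not be left exact, so limits must be constructed directly. It suffices to produce in $\lcom CC$ all equalizers and all finite (respectively, all small) products, the empty product then being a terminal object; for a category with all finite (resp. small) products and all equalizers has all finite (resp. small) limits. The hard part --- the one step where the coalgebra axioms must be re-established for an object not produced by a colimit, and where well-poweredness and the behaviour of $C\otimes-$ on monomorphisms genuinely enter --- will be the construction of \textbf{equalizers}; simply taking the equalizer in $\mathcal{C}$ does not work, since it need not admit a coaction.

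The first sub-step is that $\lcom CC$ inherits the (regular epi, mono)-factorizations of $\mathcal{C}$ recalled in the introduction. Indeed, for a comodule morphism $h\colon T\to V$ write $h=m\circ e$ with $e=\mathrm{coker}(\mathrm{ker}\,h)$ a regular epimorphism and $m$ a monomorphism in $\mathcal{C}$; since $\mathrm{ker}(h)$ is killed by $h$ and $\id C\otimes m$ is again a monomorphism --- this is exactly where the hypothesis that $C\otimes-$ preserves pullbacks of monomorphisms, hence monomorphisms, is used --- the morphism $(\id C\otimes e)\circ\rho_{T}$ is also killed by $\mathrm{ker}(h)$ and therefore factors through $e$ as some $\rho_{J}\circ e$. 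A routine check shows $\rho_{J}$ is a coaction on $J$ and that $e,m$ are comodule morphisms, so $h=m\circ e$ is an image factorization in $\lcom CC$. Given this, for $f,g\colon V\rightrightarrows U$ in $\lcom CC$ I would let $E$ be the image in $\lcom CC$ of the coproduct --- which exists by (a) --- of all subcomodules $W\hookrightarrow V$ that are monomorphisms in $\mathcal{C}$ and on which $f$ and $g$ agree; this is legitimate since $\mathcal{C}$ is well-powered, so there is only a set of such $W$, and cocomplete. One verifies that $E$ is a subcomodule of $V$ with $f|_{E}=g|_{E}$, and that every comodule morphism $h$ with $fh=gh$ factors through the monomorphism $E\hookrightarrow V$: its image in $\lcom CC$ is one of the admissible $W$'s, hence contained in $E$, and factorization through a monomorphism is unique. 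Thus $E=\mathrm{eq}(f,g)$ in $\lcom CC$.

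For \textbf{products} I would exploit the cofree comodules $CF(X)=(C\otimes X,\cm_{C}\otimes\id X)$ and the universal property from the excerpt, which is a natural isomorphism $\mathrm{Hom}_{\lcom CC}(T,CF(X))\cong\mathrm{Hom}_{\mathcal{C}}(UT,X)$ --- morally, a product of cofree comodules is cofree on the product of the underlying objects. For a family $(V_{i},\rho_{i})_{i\in\Lambda}$ (finite for (d), arbitrary for (e)), form $X=\prod_{i}V_{i}$ and $Y=\prod_{i}(C\otimes V_{i})$ in $\mathcal{C}$, put $\Phi:=CF(\prod_{i}\rho_{i})\colon CF(X)\to CF(Y)$, and let $\Psi\colon CF(X)\to CF(Y)$ be the comodule morphism corresponding, under $\mathrm{Hom}_{\lcom CC}(CF(X),CF(Y))\cong\mathrm{Hom}_{\mathcal{C}}(C\otimes X,Y)$ (the special case $T=CF(X)$ of the above, since $UCF(X)=C\otimes X$), to the canonical morphism $C\otimes X\to Y$ assembled from the $\id C\otimes\pi_{i}$. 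Set $P:=\mathrm{eq}(\Phi,\Psi)$, which exists by the previous step. Applying $\mathrm{Hom}_{\lcom CC}(T,-)$, which preserves equalizers, together with the two displayed isomorphisms and the fact that equalizers commute with products in the category of sets, one identifies $\mathrm{Hom}_{\lcom CC}(T,P)$ naturally in $T$ with the set of families $(h_{i})_{i}$ of comodule morphisms $h_{i}\colon T\to V_{i}$, i.e. with $\prod_{i}\mathrm{Hom}_{\lcom CC}(T,V_{i})$; hence $P=\prod_{i}V_{i}$. This yields all finite products (including the terminal object) and thus (d), and, when $\mathcal{C}$ is complete, all small products and thus (e). (One could instead aim at pullbacks directly, using that $C\otimes-$ preserves pullbacks of monomorphisms, but the legs of a general pullback square need not be monic, so the pullback formed in $\mathcal{C}$ does not obviously acquire a coaction; the route through equalizers and products avoids this.)
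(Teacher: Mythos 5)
Your parts (a)--(c) are exactly the paper's argument (the forgetful functor creates colimits, with the axioms checked by precomposing with the colimit injections), so nothing to add there. For (d) and (e) your route is genuinely different from the paper's and, as far as I can see, it works. The paper builds the limit of an inverse system $\left(M_{i}\right)$ inside the cofree comodule $CF\bigl(\limc C{i\in I}M_{i}\bigr)$ as the colimit of the family $\mathcal{E}$ of subcomodules $E$ for which the induced maps $\pi_{i}\circ p\circ j_{E}$ are comodule morphisms; the hypothesis that $C\otimes-$ preserves pullbacks of monomorphisms is used precisely to make $\mathcal{E}$ directed (pullbacks and pushouts of subcomodules are again subcomodules), and the payoff is a concrete description of the limit as a maximal subcomodule of $\limc C{i\in I}M_{i}$ on which all $\pi_{i}$ are comodule maps (cf. the paper's second remark and the $\mathrm{Vect}_{K}$ example). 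You instead reduce to equalizers plus products: equalizers are obtained as the image, in $\lcom CC$, of the coproduct of all equalizing subcomodules (well-poweredness gives a set of them, part (a)/(c) gives the coproduct, and your lifted coim-factorization gives the image), and products are obtained as equalizers of two maps between cofree comodules via the adjunction $\mathrm{Hom}_{\lcom CC}\left(T,CF\left(X\right)\right)\cong\mathrm{Hom}_{\mathcal{C}}\left(UT,X\right)$ --- essentially the comonadic (dual-Linton) presentation. Both proofs rest on the same two pillars, cofree comodules and the fact that the factorization $f=k_{f}\circ\mathrm{coim}\left(f\right)$ (with $k_{f}$ monic, as recalled from \cite{AR}) lifts to $\lcom CC$, which is the paper's Corollary \ref{cor:Under-the-assumptions}; but your proof of that lifting is more economical, since it only needs $\id C\otimes k_{f}$ to be monic, i.e.\ that $C\otimes-$ preserves monomorphisms (the pullback of a mono along itself), rather than general pullbacks of monomorphisms, and no directedness of any family is ever required --- a mild weakening of the paper's technical hypothesis. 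What your route gives up is the explicit realization of the limit inside $\limc C{i\in I}M_{i}$; what it buys is a slightly weaker assumption and a construction that is uniform in the shape of the limit. Two small points to keep straight when writing it up: the uniqueness of the coaction on a subobject (needed so that ``all admissible subcomodules'' form a set indexed by $\mathrm{Sub}_{\mathcal{C}}$) again uses that $\id C\otimes w$ is monic, and the identification of $\mathrm{Hom}_{\lcom CC}\left(T,\mathrm{eq}\left(\Phi,\Psi\right)\right)$ with $\prod_{i}\mathrm{Hom}_{\lcom CC}\left(T,V_{i}\right)$ should be accompanied by the observation that the bijection is induced by the actual cone of projections $P\to V_{i}$, so that no Yoneda-type smallness issue arises.
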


\subsubsection*{Proof of \emph{(c)}.}

Let $\left\{ V_{i},q_{ji}\right\} _{i\in I}$ be a direct system of
objects in $\mathcal{C}$ and $\colimc C{i\in I}V_{i}$ be it's colimit
in $\mathcal{C}$ with cannonical maps $q_{i}:V_{i}\to\colimc C{i\in I}V_{i}$
(which are not necessarily monics). Then the system of maps $\left\{ \left(\mathrm{id}_{C}\otimes q_{i}\right)\circ\rho_{i}:V_{i}\to C\otimes\colimc C{i\in I}V_{i}\right\} _{i\in I}$
commute with $q_{ji}$ ($j\leq i$, see the diagram)

\begin{center}
\begin{tikzpicture}[node distance=2cm, auto]    
\node (mj) {$V_j$};   
\node (cmj) [below of=mj] {$C\otimes V_j$};   
\node (mi) [right= and 3cm of mj] {$V_i$};   
\node (cmi) [below of=mi] {$C\otimes V_i$};   
\node (pmi) [right= and 3cm of mi] {$\colimc C{i\in I}V_{i}$};   
\node (cpmi) [below of=pmi] {$C\otimes\colimc C{i\in I}V_{i}$};   
\draw[->] (mi) to node {$q_i$} (pmi);   
\draw[->] (mj) to node {$q_{ji}$} (mi);   
\draw[->, bend left] (mj) to node [swap] {$q_{j}$} (pmi);   
\draw[->, dashed] (pmi) to node {$\exists !\rho_V$} (cpmi);   
\draw[->] (mi) to node [swap] {$\rho_i$} (cmi); 
\draw[->] (mj) to node [swap] {$\rho_j$} (cmj); 
\draw[->] (cmi) to node {$\mathrm{id}_C\otimes q_i$} (cpmi); 
\draw[->, bend right] (cmj) to node {$\mathrm{id}_C\otimes q_j$} (cpmi);
\draw[->] (cmj) to node {$\mathrm{id}_C\otimes q_{ji}$} (cmi); 
\end{tikzpicture} 
\end{center}and thus by the universal property of $V=\colimc C{i\in I}V_{i}$
we have the unique morphism $\rho_{V}:V\to C\otimes V$. Checking
that $\rho$ satisfies axioms for comodule coaction is an exercise
in diagram chasing and using the universal property of colimit. Specifically,
the identity $ $$\lambda\left(V\right)\circ\left(\epsilon_{C}\otimes\id V\right)\circ\rho_{V}=\id V$
follows from the diagram \begin{center}
\begin{tikzpicture}[node distance=1.8cm, auto]    
\node (mj) {$V_j$};   
\node (cmj) [below of=mj] {$C\otimes V_j$};   
\node (mi) [right= and 3cm of mj] {$V_i$};   
\node (cmi) [below of=mi] {$C\otimes V_i$};   
\node (pmi) [right= and 3cm of mi] {$V$};   
\node (cpmi) [below of=pmi] {$C\otimes V$};   
\node (imi) [below of=cmi] {$I\otimes V_i$};   
\node (imj) [below of=cmj] {$I\otimes V_j$};   
\node (ipmi) [below of=cpmi] {$I\otimes V$};   
\node (omi) [below of=imi] {$V_i$};   
\node (omj) [below of=imj] {$V_j$};   
\node (opmi) [below of=ipmi] {$V$};   
\draw[->] (mi) to node {$q_i$} (pmi);   
\draw[->] (mj) to node {$q_{ji}$} (mi);   
\draw[->, dashed, bend left] (pmi) to node {$\mathrm{id_V}$} (opmi);   
\draw[->] (mi) to node [swap] {$\rho_i$} (cmi); 
\draw[->] (mj) to node {$\rho_j$} (cmj); 
\draw[->] (pmi) to node [swap] {$\rho$} (cpmi); 
\draw[->] (cmi) to node {$\mathrm{id}_C\otimes q_i$} (cpmi); 
\draw[->] (cmj) to node {$\mathrm{id}_C\otimes q_{ji}$} (cmi); 
\draw[->] (cmi) to node [swap] {$\epsilon_C\otimes\mathrm{id}_{V_i}$} (imi); 
\draw[->] (cmj) to node {$\epsilon_C\otimes\mathrm{id}_{V_j}$} (imj); 
\draw[->] (cpmi) to node [swap] {$\epsilon_C\otimes\mathrm{id}_{V}$} (ipmi); 
\draw[->] (imi) to node {$\mathrm{id}_I\otimes q_i$} (ipmi); 
\draw[->] (imj) to node {$\mathrm{id}_I\otimes q_{ji}$} (imi); 
\draw[->] (imi) to node [swap] {$\lambda(V_i)$} (omi); 
\draw[->] (imj) to node {$\lambda(V_j)$} (omj); 
\draw[->] (ipmi) to node [swap] {$\lambda(V)$} (opmi); 
\draw[->] (omi) to node {$q_i$} (opmi);   
\draw[->] (omj) to node {$q_{ji}$} (omi);   
\draw[->, bend right] (mj) to node [swap] {$\mathrm{id_{V_j}}$} (omj);   
\end{tikzpicture} 
\end{center}Identity $ $$\left(\cm_{C}\otimes\id V\right)\circ\rho_{V}=\left(\id C\otimes\rho_{V}\right)\circ\rho_{V}$
can be proved in a similar way. $\square$

Statements (a) and (b) can be proved similarly, since the proof of
(c) can be adapted to any specific kind of colimit. Similarly, (d)
can be proven similar to (e). Before proving it, we first prove the
following technical result.
\begin{lem*}
Let $X\in\lcom CC$ and suppose the functor $C\otimes-$ preserves
pullbacks of monomorphisms in $\mathcal{C}$. If $U'\to X$ and $U''\to X$
are subcomodules of $X$ then the pullback ${\displaystyle U'\prod_{X}U''}\in\lcom CC$.
Furthermore, pushforward of comodule morphisms is a comodule.\end{lem*}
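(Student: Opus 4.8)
The plan is to transport the coaction downstairs to upstairs. Form $P:=U'\prod_X U''$ in $\mathcal{C}$, with projections $p'\colon P\to U'$ and $p''\colon P\to U''$; these are monomorphisms, being pullbacks of the monomorphisms $u''\colon U''\to X$ and $u'\colon U'\to X$. Since $u'$ and $u''$ are monomorphisms, the hypothesis says that applying $C\otimes-$ to the pullback square again yields a pullback, so $C\otimes P=(C\otimes U')\prod_{C\otimes X}(C\otimes U'')$. One first checks that $\rho_{U'}\circ p'$ and $\rho_{U''}\circ p''$ form a cone over $C\otimes X$: composing with $\id C\otimes u'$ and $\id C\otimes u''$ and using that $u',u''$ are comodule morphisms, both composites equal $\rho_X$ precomposed with the common map $u'\circ p'=u''\circ p''\colon P\to X$. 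Hence there is a unique $\rho_P\colon P\to C\otimes P$ with $(\id C\otimes p')\circ\rho_P=\rho_{U'}\circ p'$ and $(\id C\otimes p'')\circ\rho_P=\rho_{U''}\circ p''$; once $\rho_P$ is shown to be a coaction, these two equations say precisely that $p'$ and $p''$ are comodule morphisms.

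Next one verifies the two comodule axioms for $\rho_P$, by the same kind of diagram chase as in part (c). The mechanism is that each required identity may be tested after composing with $p'$, or with $\id C\otimes p'$, or with $\id{C\otimes C}\otimes p'$, all of which are monomorphisms: $p'$ because it is a pullback of $u''$, and the latter two because applying the hypothesis to the trivial pullback exhibiting $p'$ as a monomorphism shows that $C\otimes-$, and hence $C\otimes C\otimes-$, preserves monomorphisms. After such a composition, the counit axiom for $\rho_P$ reduces --- using naturality of $\lambda$ and the defining equation $(\id C\otimes p')\circ\rho_P=\rho_{U'}\circ p'$ --- to the counit axiom for $\rho_{U'}$, and similarly the coassociativity axiom reduces to that for $\rho_{U'}$. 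Finally $P$ is the pullback in $\lcom CC$: if $T$ is a comodule with comodule morphisms $a\colon T\to U'$ and $b\colon T\to U''$ with $u'\circ a=u''\circ b$, the induced $\mathcal{C}$-morphism $t\colon T\to P$ satisfies $(\id C\otimes p')\circ(\id C\otimes t)\circ\rho_T=\rho_{U'}\circ a=\rho_{U'}\circ p'\circ t=(\id C\otimes p')\circ\rho_P\circ t$, so $t$ is a comodule morphism since $\id C\otimes p'$ is mono, and $t$ is unique because the forgetful functor $\lcom CC\to\mathcal{C}$ is faithful.

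For the last assertion, note that the argument above never used that $u''$, as opposed to $u'$, is a monomorphism: the identical transport of structure works for the pullback $Z\prod_X U'$ of an \emph{arbitrary} comodule morphism $g\colon Z\to X$ along a subcomodule $m\colon U'\hookrightarrow X$, since that is still a pullback of a monomorphism and $Z\prod_X U'\to Z$ is still mono. In particular, taking $m$ to be $0\hookrightarrow W$ gives kernels of comodule morphisms, and hence images, since the image of $f\colon V\to W$ is the kernel of the cokernel $W\to\mathrm{coker}(f)$, which exists in $\lcom CC$ by (c); this is the pushforward statement. I expect no single step to be genuinely difficult, but the crux --- and the reason the technical hypothesis is needed --- is that the forgetful functor $\lcom CC\to\mathcal{C}$ does \emph{not} create limits, so one cannot merely compute the pullback in $\mathcal{C}$; preservation of pullbacks of monomorphisms by $C\otimes-$ is exactly what makes the universal-property transfer run both at the level of $C\otimes P$ and, for coassociativity, at the level of $C\otimes C\otimes P$.
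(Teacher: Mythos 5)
Your treatment of the pullback is correct and is essentially the paper's own argument: apply $C\otimes-$ to the pullback square (legitimate by the hypothesis, since $u',u''$ are monomorphisms), check that $\rho_{U'}\circ p'$ and $\rho_{U''}\circ p''$ form a cone over $C\otimes X$, and let the universal property of $C\otimes\bigl(U'\prod_{X}U''\bigr)$ produce $\rho_{U'\prod_{X}U''}$; by construction $p'$ and $p''$ are then comodule morphisms. The only stylistic difference is in verifying the coaction axioms: the paper tests both sides of each identity against the two projections and invokes the uniqueness part of the universal property of the (tensored) pullback, while you test against the single monomorphism $p'$ (and $\id C\otimes p'$, $\id{C\otimes C}\otimes p'$), first deducing from the hypothesis, via the square exhibiting a monomorphism as a pullback of itself, that $C\otimes-$ preserves monomorphisms. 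Both mechanisms work, and your observation about preservation of monos is a pleasant extra.

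The second assertion, however, you have misread. In this paper ``pushforward'' means pushout: see the corollaries that follow, where the join of two subcomodules is $U'\coprod_{U'\bigwedge U''}U''$ (``the pushforward'') and $\mathrm{Coker}(f)$ is called ``a pushforward of comodule morphisms.'' The intended claim is that the pushout of comodule morphisms carries a comodule structure, and the intended proof is a one-liner: a pushout is the colimit of a small diagram in $\lcom CC$, which exists and is computed on underlying objects by part (c). What you prove instead --- that the transport of structure also works for the pullback of an arbitrary comodule morphism along a subcomodule, hence kernels and then images via $\mathrm{Ker}(\mathrm{Coker}(f))$ --- is a reasonable (and partly useful) statement, but it is not the one claimed, and it would not yield the join of two subcomodules, which is exactly what the lemma is later used for in showing that the system $\mathcal{E}$ is directed; note also that your generalization silently reads the hypothesis as preservation of pullbacks of a monomorphism along arbitrary morphisms, not just pullbacks of pairs of subobjects. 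The gap is easily repaired by adding the one-line colimit argument, but as written the second claim of the lemma is not established.
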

\begin{proof}
By assumptions of our lemma, tensoring the pullback diagram for $U'\to X$,
$U''\to X$ with $C\otimes-$ gives us the pullback diagram for $C\otimes U'\to C\otimes X$,
$C\otimes U''\to C\otimes X$. Putting both diagrams together results
in existence of the map $\rho_{U'\prod_{X}U''}:U'\prod_{X}U''\to C\otimes\left(U'\prod_{X}U''\right)$

\begin{center}
\begin{tikzpicture}[node distance=1.3cm, auto, 
cross line/.style={preaction={draw=white, -, line width=6pt}}] 
\node (u1wwu2) {$U'\prod_X U''$};  
\node (cu1wwu2) [right of=u1wwu2, below of=u1wwu2] {$C\otimes\left(U'\prod_X U''\right) $};
\node (u1) [below of=cu1wwu2, left of=cu1wwu2] {$U'$};
\node (u2) [right of=cu1wwu2, above of=cu1wwu2] {$U''$};
\node (x1) [right of=cu1wwu2, below of=cu1wwu2] {$X$};
\node (cu1) [below of=u1, right of=u1] {$C\otimes U'$};
\node (cu2) [below of=u2, right of=u2] {$C\otimes U''$};
\node (cx1) [below of=x1, right of=x1] {$C\otimes X$};
\draw[->] (u1wwu2) to node {} (u1);
\draw[->] (u1wwu2) to node {} (u2);
\draw[->] (u1) to node {} (x1);
\draw[->] (u2) to node {} (x1);
\draw[->, cross line] (cu1wwu2) to node {} (cu1);
\draw[->, cross line] (cu1wwu2) to node {} (cu2);
\draw[->, cross line] (cu1) to node {} (cx1);
\draw[->, cross line] (cu2) to node {} (cx1);
\draw[->] (u1) to node {$\rho_{U'}$} (cu1);
\draw[->] (u2) to node {$\rho_{U''}$} (cu2);
\draw[->] (x1) to node {$\rho_{X}$} (cx1);
\draw[->, dashed] (u1wwu2) to node {$\rho_{U'\prod_{X}U''}$} (cu1wwu2);
\end{tikzpicture} 
\end{center}Proving that $\rho_{U'\prod_{X}U''}$ satisfies the axioms of comodule
coaction is another exercise in using (uniqueness part of the) universal
property of pullbacks. Thus $U'\prod_{X}U''$ is a comodule and the
maps $U'\prod_{X}U''\to U'$, $U'\prod_{X}U''\to U''$ are comodule
morphisms by construction of $U'\prod_{X}U''$. 

Since pushforwards are colimits of a small diagram in $\lcom CC$,
it is a comodule by part (c) of the theorem.\end{proof}
\begin{cor}
Under the assumptions of the lemma, let $X\in\mathcal{C}$. Then $\mathrm{Sub}_{\lcom CC}\left(X\right)$
is a lattice.\end{cor}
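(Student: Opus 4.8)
The plan is to produce, for any two subcomodules $U'\rightarrowtail X$ and $U''\rightarrowtail X$, a greatest lower bound $U'\wedge U''$ and a least upper bound $U'\vee U''$ in the poset $\mathrm{Sub}_{\lcom CC}(X)$; together with its top element $X$ and its bottom element --- the zero subcomodule, available since $\mathcal{C}$, hence $\lcom CC$, has a zero object --- this exhibits $\mathrm{Sub}_{\lcom CC}(X)$ as a bounded lattice. Two facts are used throughout. First, the forgetful functor $\lcom CC\to\mathcal{C}$ is faithful and hence reflects monomorphisms. Second, the technical hypothesis that $C\otimes-$ preserves pullbacks of monomorphisms implies that $C\otimes-$ preserves monomorphisms: for a monomorphism $m$ in $\mathcal{C}$ the pullback of $m$ along itself is $(\mathrm{id},\mathrm{id})$, so applying $C\otimes-$ shows the pullback of $C\otimes m$ along itself is $(\mathrm{id},\mathrm{id})$, that is, $C\otimes m$ is a monomorphism.

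For the meet I take $U'\wedge U'':=U'\prod_X U''$, the pullback in $\mathcal{C}$. By the Lemma this object carries a coaction for which the two projections, in particular the composite $U'\prod_X U''\to U'\to X$, are comodule morphisms; since a pullback of a monomorphism is a monomorphism in $\mathcal{C}$ and the forgetful functor reflects monomorphisms, $U'\prod_X U''\rightarrowtail X$ is a subcomodule. The uniqueness-of-coaction argument from the proof of the Lemma --- via the universal property of the cofree comodule $C\otimes(U'\prod_X U'')$ --- shows that $U'\prod_X U''$ is also the pullback of $U'\to X\leftarrow U''$ in $\lcom CC$, hence is the greatest lower bound of $U'$ and $U''$ in $\mathrm{Sub}_{\lcom CC}(X)$.

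For the join I form the coproduct $P:=U'\sqcup U''$ in $\lcom CC$, which exists by part~(a) of the Theorem, with coaction $\rho_P$ and with the comodule morphism $\phi\colon P\to X$ induced by the two inclusions. I then take the coimage factorization $\phi=m\circ e$ in $\mathcal{C}$ of the morphism underlying $\phi$, where $e\colon P\twoheadrightarrow Z$ is a regular epimorphism (the cokernel of the kernel of $\phi$, which exists because $\mathcal{C}$ has a zero object, finite limits and all colimits) and $m\colon Z\rightarrowtail X$ is a monomorphism, as in the discussion of coimage factorizations preceding the Theorem. The crucial step is to lift $Z$ to a subcomodule of $X$. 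Since $\phi$ is a comodule morphism, $\rho_X\circ\phi=(C\otimes\phi)\circ\rho_P=(C\otimes m)\circ(C\otimes e)\circ\rho_P$; because $C\otimes m$ is a monomorphism, this forces $(C\otimes e)\circ\rho_P$ to coequalize the kernel pair of $e$, and as $e$ is a regular epimorphism, hence the coequalizer of its own kernel pair, $(C\otimes e)\circ\rho_P$ factors uniquely as $\rho_Z\circ e$ for a unique morphism $\rho_Z\colon Z\to C\otimes Z$. A diagram chase of the kind already used for part~(c) and the Lemma --- pulling the two comodule axioms back along the epimorphism $e$, where they are known to hold on $P$ --- shows that $(Z,\rho_Z)$ is a comodule and that $e$ and $m$ are comodule morphisms. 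Since $m$ is a monomorphism in $\mathcal{C}$ it is a monomorphism in $\lcom CC$, so $(Z,m)\in\mathrm{Sub}_{\lcom CC}(X)$.

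It remains to check that $(Z,m)$ is the least upper bound of $U'$ and $U''$. Composing the coproduct injections with $e$ gives comodule morphisms $U'\to Z$, $U''\to Z$ whose composites with $m$ recover the inclusions $U'\rightarrowtail X$ and $U''\rightarrowtail X$, so $U'\le Z$ and $U''\le Z$. If $W\rightarrowtail X$ is any subcomodule with $U'\le W$ and $U''\le W$, then $\phi$ factors through $W$ in $\lcom CC$, so the morphism underlying $\phi$ factors through the monomorphism underlying $W$; the orthogonality of the regular epimorphism $e$ to monomorphisms then produces a morphism $Z\to W$ over $X$, which is a comodule morphism by the same cancellation of $e$ as above, so $Z\le W$. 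Hence $Z=U'\vee U''$. The step I expect to be the main obstacle is the lifting in the previous paragraph --- equipping the image $Z$ of $\phi$ with a coaction and checking that it has the universal property of the join in $\lcom CC$ and not merely in $\mathcal{C}$ --- and this is precisely where the hypothesis that $C\otimes-$ preserves pullbacks of monomorphisms, through its consequence that $C\otimes-$ preserves monomorphisms, is indispensable; the remaining verifications are routine diagram chasing like that already carried out above.
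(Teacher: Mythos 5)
Your meet is exactly the paper's (the pullback $U'\prod_X U''$, a comodule by the Lemma), but your join is obtained by a genuinely different route. The paper's proof is a one-liner: it takes $U'\vee U''$ to be the pushout $U'\coprod_{U'\wedge U''}U''$ of $U'\leftarrow U'\wedge U''\to U''$, which is a comodule because pushouts are colimits in $\lcom CC$ by part (c); this is shorter, but it silently uses that this pushout is again a subobject of $X$ (asserted for $\mathrm{Sub}_{\mathcal{C}}(X)$ without proof) and leaves implicit that the comparison maps witnessing the lattice operations in $\mathrm{Sub}_{\lcom CC}(X)$ are comodule morphisms. You instead factor the canonical map $U'\sqcup U''\to X$ through its coimage $Z$ and descend the coaction along the regular epimorphism $e$, using your correct observation that preservation of pullbacks of monomorphisms forces $C\otimes -$ to preserve monomorphisms, and then you verify the least-upper-bound property inside $\lcom CC$ via orthogonality of $e$ against monomorphisms. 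This buys independence from the pushout-embeds-in-$X$ assertion and makes the universal properties in $\lcom CC$ explicit; the cost is that you import the coimage-factorization machinery (including the claim, taken from the discussion of \cite[0.5]{AR} preceding the Theorem, that $k_{\phi}$ is monic) which the paper only deploys later, and in effect your descent argument is an independent proof of Corollary \ref{cor:Under-the-assumptions} --- that $\mathrm{Coim}$ of a comodule map is a comodule and $k_{\phi}$ a comodule morphism --- which the paper proves instead by viewing kernels as pullbacks and cokernels as pushouts. Both arguments use only the stated hypotheses, so neither is more general; yours is longer but more self-contained in its justifications.
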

\begin{proof}
$\mathrm{Sub}_{\lcom CC}\left(X\right)$ is a subcategory of $\mathrm{Sub}_{\mathcal{C}}\left(X\right)$,
which is a lattice with a meet $U'\bigwedge U''=U'\prod_{X}U''$ being
the pullback of $U'\to X$, $U''\to X$ and the join $U'\bigvee U''=U'\coprod_{U'\bigwedge U''}U''$
being the pushforward. By lemma, they are both comodules.\end{proof}
\begin{cor}
\label{cor:Under-the-assumptions}Under the assumptions of the lemma,
let $\mathcal{C}$ have a zero object. If $X,V\in\lcom CC$ and $f:V\to X$
is a comodule morphism, then the $\mathrm{Coim}\left(f\right)=\mathrm{Coker}\left(\mathrm{Ker}\left(f\right)\right)\in\lcom CC$
and in the factorization $f=k_{f}\circ\mathrm{coim}\left(f\right)$
one get that $k_{f}$ is a comodule morphism.\end{cor}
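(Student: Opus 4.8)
The plan is to realize $\mathrm{Coim}(f)$ as the image of $f$ taken in $\mathcal{C}$ and then lift a coaction onto it along a regular epimorphism. By the discussion preceding the theorem, in $\mathcal{C}$ the morphism $f$ has the coimage factorization $f=k_{f}\circ\mathrm{coim}(f)$ with $\mathrm{coim}(f)\colon V\to Q$ a regular epimorphism, $Q:=\mathrm{Coim}(f)=\mathrm{Coker}(\mathrm{Ker}(f))$, and $k_{f}\colon Q\to X$ a monomorphism. It therefore suffices to equip $Q$ with a coaction $\rho_{Q}\colon Q\to C\otimes Q$ turning both $k_{f}$ and $\mathrm{coim}(f)$ into comodule morphisms; the object assertion and the claim about $k_{f}$ are then immediate.

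First I would observe that the technical hypothesis makes $C\otimes-$ preserve monomorphisms: a morphism $m\colon A\to B$ is monic precisely when $A$, with both projections the identity, is the pullback of $m$ along $m$, and $C\otimes-$ preserves this pullback of monomorphisms, whence $\id C\otimes m$ is monic; in particular $\id C\otimes k_{f}$ is a monomorphism. Next, consider the square with left edge $\mathrm{coim}(f)\colon V\to Q$, right edge $\id C\otimes k_{f}\colon C\otimes Q\to C\otimes X$, top edge $(\id C\otimes\mathrm{coim}(f))\circ\rho_{V}\colon V\to C\otimes Q$ and bottom edge $\rho_{X}\circ k_{f}\colon Q\to C\otimes X$. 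It commutes because $f$ is a comodule morphism: $(\id C\otimes k_{f})\circ(\id C\otimes\mathrm{coim}(f))\circ\rho_{V}=(\id C\otimes f)\circ\rho_{V}=\rho_{X}\circ f=\rho_{X}\circ k_{f}\circ\mathrm{coim}(f)$. Since regular epimorphisms admit the diagonal fill-in against monomorphisms, there is a unique $\rho_{Q}\colon Q\to C\otimes Q$ with $\rho_{Q}\circ\mathrm{coim}(f)=(\id C\otimes\mathrm{coim}(f))\circ\rho_{V}$ and $(\id C\otimes k_{f})\circ\rho_{Q}=\rho_{X}\circ k_{f}$; the latter identity says $k_{f}$ is a comodule morphism and the former that $\mathrm{coim}(f)$ is one, once we know $\rho_{Q}$ is a bona fide coaction.

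Verifying the two comodule axioms for $\rho_{Q}$ I expect to be a routine diagram chase rather than the real difficulty: to prove a desired identity, precompose it with the epimorphism $\mathrm{coim}(f)$, rewrite the composite using $\rho_{Q}\circ\mathrm{coim}(f)=(\id C\otimes\mathrm{coim}(f))\circ\rho_{V}$, reduce to the corresponding axiom for $\rho_{V}$ by naturality of $\lambda$ and functoriality of $\otimes$, and then cancel the epimorphism $\mathrm{coim}(f)$. For instance the counit axiom follows from the chain $\lambda(Q)\circ(\epsilon_{C}\otimes\id Q)\circ\rho_{Q}\circ\mathrm{coim}(f)=\mathrm{coim}(f)\circ\lambda(V)\circ(\epsilon_{C}\otimes\id V)\circ\rho_{V}=\mathrm{coim}(f)=\id Q\circ\mathrm{coim}(f)$, and coassociativity is entirely analogous. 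This establishes that $Q=\mathrm{Coim}(f)\in\lcom CC$ and that $k_{f}$ (and also $\mathrm{coim}(f)$) is a comodule morphism.

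The only genuinely essential use of the hypothesis on $C\otimes-$ is to guarantee that $\id C\otimes k_{f}$ is a monomorphism, which is exactly what makes the diagonal fill-in against the regular epimorphism $\mathrm{coim}(f)$ available; this is the conceptual heart of the proof and is also why one must first factor in $\mathcal{C}$ and transport the coaction, rather than try to endow $\mathrm{Ker}(f)$ with a comodule structure directly — the latter would need $C\otimes-$ to preserve a kernel, which is not among our assumptions.
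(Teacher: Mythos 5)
Your argument is correct, but it takes a genuinely different route from the paper. The paper disposes of the corollary in one line: $\mathrm{Ker}(f)$ is the pullback of the comodule morphisms $f\colon V\to X$ and $0\to X$, hence carries a coaction by the same pullback argument as in the lemma (this is where the hypothesis on $C\otimes-$ enters for the paper), and $\mathrm{Coim}(f)=\mathrm{Coker}(\mathrm{Ker}(f))$ is then a cokernel (pushout) of comodule morphisms, hence a comodule by part (c); that $k_{f}$ is a comodule morphism falls out of the universal property of this cokernel computed in $\lcom CC$, which by (c) agrees with the one computed in $\mathcal{C}$. You instead factor $f$ in $\mathcal{C}$ and transport the coaction onto $\mathrm{Coim}(f)$ by the unique diagonal fill-in of the regular epimorphism $\mathrm{coim}(f)$ against the monomorphism $\id C\otimes k_{f}$, after the nice observation that the pullback hypothesis already forces $C\otimes-$ to preserve monomorphisms. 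Your route buys something: it uses only mono-preservation (a weaker consequence of the stated hypothesis), it produces the coaction on $\mathrm{Coim}(f)$ explicitly, and it shows at once that both $k_{f}$ and $\mathrm{coim}(f)$ are comodule maps, with the comodule axioms verified by cancelling the epimorphism $\mathrm{coim}(f)$. On the other hand, you lean on $k_{f}$ being monic and on $\mathrm{coim}(f)$ being a regular epimorphism, facts the paper only secures in its preliminary discussion via completeness and well-poweredness, whereas the paper's own proof of this corollary needs neither. Finally, your closing remark is slightly off target: endowing $\mathrm{Ker}(f)$ with a comodule structure does not require preservation of kernels beyond what is assumed, since the kernel is the pullback of the monomorphism $0\to X$ along $f$, which is exactly the kind of pullback the hypothesis (as the paper applies it) preserves --- and that is precisely the paper's argument.
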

\begin{proof}
$\mathrm{Ker}\left(f\right)$ is a pullback and $\mathrm{Coker}\left(f\right)$
is a pushforward of comodule morphisms.
\end{proof}

\subsubsection*{Proof of \emph{(e)}.}

Our proof is similar to the case of coalgebras (published \cite{AGO},
unpublished \cite{KUR}). Let $\left\{ M_{i},\pi_{ij}\right\} _{i\in I}$
be inverse system of objects of $\lcom CC$ and $\left(\limc C{i\in I}M_{i},\left\{ \pi_{i}\right\} _{i\in I}\right)$
be the limit of $M_{i}$ in $\mathcal{C}$, with cannonical projections
$\pi_{i}:\limc C{i\in I}M_{i}\to M_{i}$. Consider the cofree comodule
$\left(CF\left(\limc C{i\in I}M_{i}\right),p\right)$ over $\limc C{i\in I}M_{i}$
and define $\mathcal{E}=\left\{ E,j_{E}:E\rightarrowtail CF\left(\limc C{i\in I}M_{i}\right)\right\} $
to be the collection of subobjects $E$ of $CF\left(\limc C{i\in I}M_{i}\right)$,
\begin{center}
\begin{tikzpicture}[node distance=2cm, auto, 
cross line/.style={preaction={draw=white, -, line width=6pt}}] 
\node (e) {$E$};   
\node (CF) [below of=e] {$CF\left(\limc C{i\in I}M_{i}\right)$};
\node (clime) [below of=e, right= and 2cm of e] {$D={\displaystyle{
\lim_{\underset{ \mathcal{E}} {\longrightarrow }}}^{C-\mathrm{Comod}_{\mathcal{C}}}}E_{j_E}$};   
\node (limmi) [below of=clime, left= and 1.5cm of clime] {${\displaystyle{\lim_{\underset{I} {\longleftarrow }}}^{\mathcal{C}}}M_{i}$};   
\node (mi) [below of=limmi, left= and 2cm of limmi] {$M_i$};
\node (mj) [below of=limmi, right= and 2cm of limmi] {$M_j$};
\draw[->] (mj) to node {$\pi_{ij}$} (mi);
\draw[->] (limmi) to node[swap] {$\pi_{i}$} (mi);
\draw[->] (limmi) to node {$\pi_{j}$} (mj);
\draw[->] (clime) to node {$j$} (CF);
\draw[->, cross line, bend left] (clime) .. controls +(230:4cm) and +(10:3cm) .. node {$s_{i}$} (mi);
\draw[->] (clime) to node {$s_{j}$} (mj);
\draw[->, bend right] (e) to node[swap] {$s_{E,i}$} (mi);
\draw[->] (e) .. controls +(1:8cm) and +(30:4cm) .. node {$s_{E,j}$} (mj);
\draw[->] (e) to node {$j_{E}$} (CF);
\draw[->] (CF) to node {$p$} (limmi);
\draw[->] (e) to node {$q_{E}$} (clime);
\end{tikzpicture} 
\end{center} such that $E$ is a left $C$-comodule and $s_{E,i}=\pi_{i}\circ p\circ j_{E}$
is a $C$-comodule morphism for all $i\in I$. $\mathcal{E}$ is not
empty, since it contains zero comodule. Our lemma implies that $\mathcal{E}$
is a direct system in $\lcom CC$ with respect to factoring relation
for $j_{E}$ and we can take it's colimit $D=\colimc{\lcom CC}{\mathcal{E}}E_{j_{E}}$.
$D$ exists because $\mathcal{C}$ is well-powered and from the statement
(c) we know that, as an object of $\mathcal{C}$, $D=\colimc C{\mathcal{E}}E_{j_{E}}$.
For every $E\in\mathcal{E}$ we have $j_{E}:E\to CF\left(\limc C{i\in I}M_{i}\right)$
and from the universal property of colimits in $\mathcal{C}$ we have
the unique map $j:D\to CF\left(\limc C{i\in I}M_{i}\right)$. From
$\pi_{i}\circ p\circ j_{E}:E\to M_{i}$ for every $i\in I$ from the
universal property of colimits in $\lcom CC$ we have a unique map
of comodules $s_{i}:D\to M_{i}$, which by (the uniqueness part of)
the universal property in $\mathcal{C}$ is equal to $\pi_{i}\circ p\circ j$.
Thus $\pi_{i}\circ p\circ j$ are comodule maps. Since $\left(\limc C{i\in I}M_{i},\pi_{i}\right)_{i\in I}$
is a cone over $\left(M_{i},\pi_{ij}\right)_{i\in I}$ in $\mathcal{C}$,
$\left(D,\left\{ \pi_{i}\circ p\circ j\right\} _{i\in I}\right)$
is a cone over $\left(M_{i},\pi_{ij}\right)_{i\in I}$ in $\lcom CC$.
We claim that $\left(D,\left\{ \pi_{i}\circ p\circ j\right\} _{i\in I}\right)$
is the limit of $\left(M_{i},\pi_{ij}\right)_{i\in I}$ in $\lcom CC$.

Let $\left(U,s_{U,i}\right)_{i\in I}$ be any comodule with a system
of maps $s_{U,i}:U\to M_{i}$. Then in $\mathcal{C}$ there exists
a unique morphism $g:U\to\limc C{i\in I}M_{i}$ such that $s_{U,i}=\pi_{i}\circ g$.
By the universal property of $CF\left(\limc C{i\in I}M_{i}\right)$,
the exists a unique morphism $\tilde{g}:U\to CF\left(\limc C{i\in I}M_{i}\right)$
such that $g=p\circ\tilde{g}$. Since $\tilde{g}$ factors as $\tilde{g}=k_{\tilde{g}}\circ\mathrm{coker}\left(\mathrm{ker}\left(\tilde{g}\right)\right)$,
by corollary \ref{cor:Under-the-assumptions} $\tilde{g}$ factors
through a subcomodule $\mathrm{Coim}\left(\tilde{g}\right)$ of $CF\left(\limc C{i\in I}M_{i}\right)$.
The fact that $\pi_{i}\circ p\circ\tilde{g}=\pi_{i}\circ p\circ k_{\tilde{g}}\circ\mathrm{coker}\left(\mathrm{ker}\left(\tilde{g}\right)\right)$
are comodule morphisms mean that $\mathrm{Coim}\left(\tilde{g}\right)\in\mathcal{E}$.
Thus we have a cannonical comodule map $q_{U}:U\to D$, which satisfies
$\tilde{g}=j\circ q_{U}$ and thus $s_{U,i}=\left(\pi_{i}\circ p\circ j\right)\circ q_{U}$.
\begin{rem}
The condition that $C\otimes-$ preserves pullbacks is needed to show
that the system $\mathcal{E}$ is directed and only used in the proof
of the lemma. It is true, for example, when $\mathcal{C}=\mathrm{Vect}_{K}$
is the category of vector spaces over the field $K$, since in that
case the tensor product $C\otimes_{K}-$ is exact functor. In many
cases when this property fails, one can still show that $\mathcal{E}$
is directed. For example, in case $\mathcal{C}=\mathrm{Mod}_{R}$
for some ring $R$, the join of two subcomodule objects of $CF\left(\limc C{i\in M}M_{i}\right)$
would simply be their sum.
\end{rem}
\smallskip{}

\begin{rem}
Since $k_{f}$ is a monomorphism, it follows that $\mathrm{Coim}\left(j\right)$
is itself an element of $\mathcal{E}$. Thus $D$ can be realized
as a maximal $\lcom CC$-subobject of $\limc C{i\in I}M_{i}$ such
that $\pi_{i}|_{D}$ is a comodule map for all $i\in I$.\end{rem}
\begin{example}
(\emph{Product}) Let $\mathcal{C}=\mathrm{Vect}_{K}$ be the category
of vector spaces over the field $K$, $C$ be a coassociative coalgebra
in $\mathrm{Vect}_{K}$ and $C-\mathrm{Comod}_{K}$ be it's category
of left comodules. Then for a family $\left\{ M_{i}\right\} _{i\in I}$
of comodules it's product $D=\prod^{C-\mathrm{Comod}_{K}}M_{i}$ is
a unique maximal subspace of $\prod^{\mathrm{Vect}_{K}}M_{i}$, that
is a $C$-comodule and such that $\pi_{i}|_{D}$ is a comodule map
for all $i\in I$. It's existence can be proven directly using Zorn
lemma. It worth noting that the condition ``$\pi_{i}|_{E}$ is a
comodule map for all $i\in I$'' imply that on a subspace $E\subset\prod^{\mathrm{Vect}_{K}}M_{i}$
the comodule structure $\rho_{E}$ is unique, which makes such comodules
$E$ into a directed system under inclusion of vector spaces.

One can also construct the product similarly to our theorem. Namely,
consider the cofree comodule $\left(C\otimes\prod^{\mathrm{Vect}_{K}}M_{i},p\right)$
over $\prod^{\mathrm{Vect}_{K}}M_{i}$ with the covering map of vector
spaces $p:=\epsilon_{C}\overline{\otimes}id_{\prod^{\mathrm{Vect}_{K}}M_{i}}$.
Let $D'$ be the sum of all subcomodules $E\overset{j_{E}}{\hookrightarrow}C\otimes\prod^{\mathrm{Vect}_{K}}M_{i}$,
such the the maps $\pi_{i}\circ p\circ j_{E}$ are comodule morphisms.
As a sum of comodules, $D'$ is a comodule itself. The check that
$\left(D',\left\{ \pi_{i}\circ p\circ j_{D}\right\} _{i\in I}\right)$
is the product of the family of comodules $\left\{ M_{i}\right\} _{i\in I}$
in the category $C-\mathrm{Comod}_{K}$ goes exactly the same way
as in the theorem.

Since the product of $\left\{ M_{i}\right\} _{i\in I}$ is unique,
comodules $D$ and $D'$ are isomorphic. The isomorphism can be described
explicitly via coaction $\rho_{D}:D\to C\otimes D\subset C\otimes\prod^{\mathrm{Vect}_{K}}M_{i}$
, which is a comodule morphism between $D$ and the cofree comodule
$C\otimes\prod^{\mathrm{Vect}_{K}}M_{i}$. As a comodule coaction,
$\rho_{D}$ is injective and thus $\rho_{D}\left(D\right)$ is a subcomodule
of $C\otimes\prod^{\mathrm{Vect}_{K}}M_{i}$, isomorphic to $D$.
Since $D'$ is maximal and isomorphic to $D$, we have $D'=\rho_{D}\left(D\right)\cong D$.

If $I$ is a finite set, then, obviously, $\prod^{\mathrm{Vect}_{K}}M_{i}$
is a comodule itself and projections are comodule maps. Thus $D=\prod^{\mathrm{Vect}_{K}}M_{i}$. 

Similar description of product is valid in the case $\mathcal{C}={\rm Ban}_{\mathbb{C}}^{\leq1}$,
the category of Banach spaces over the field of complex numbers $\mathbb{C}$
with contracting linear maps, or $\mathcal{C}={\rm LCTVS}_{\mathbb{C}}$,
the category of locally convex topological vector spaces over $\mathbb{C}$,
with tensor structure given by (complete) projective tensor product
in both cases.\end{example}
\begin{acknowledgement*}
The author thanks Zongzhu Lin for suggestion to make this note as
general as possible. \end{acknowledgement*}


\begin{thebibliography}{AGO}
\bibitem[AR]{AR}Adámek, J.; Rosický, J. \emph{Locally presentable
and accessible categories.} London Mathematical Society Lecture Note
Series, 189. Cambridge University Press, Cambridge, 1994.

\bibitem[AGO]{AGO}Agore, A. L. \emph{Limits of coalgebras, bialgebras
and Hopf algebras.} Proc. Amer. Math. Soc. 139 (2011), no. 3, 855–863.

\bibitem[KUR]{KUR}Kurz, A. \emph{Limits in categories of coalgebras.}
Available from http://www.cs.le.ac.uk/\textasciitilde{}akurz.

\bibitem[P]{P}Pareigis, B. \emph{Lectures on quantum groups and non-commutative
geometry.} Available at: http://www.mathematik.uni-muenchen.de/\textasciitilde{}pareigis/Vorlesungen/02SS/
QGandNCG.pdf.

\bibitem[POR]{POR}Porst, H.-E. \emph{Fundamental constructions for
coalgebras, corings, and comodules.} Appl. Categ. Structures 16 (2008),
no. 1-2, 223–238.\end{thebibliography}
\end{document}